\documentclass[11pt,reqno]{amsart}

\usepackage[margin=1in]{geometry}
\usepackage{amsmath,amssymb,amsthm,mathtools}
\usepackage{stmaryrd}
\usepackage{enumitem}
\usepackage{extarrows}
\usepackage{xcolor}
\usepackage{tikz}
\usepackage{hyperref}

\hypersetup{
  colorlinks=true,
  linkcolor=blue,
  citecolor=blue,
  urlcolor=blue
}

\allowdisplaybreaks
\numberwithin{equation}{section}

\theoremstyle{plain}
\newtheorem{theorem}{Theorem}[section]
\newtheorem{lemma}[theorem]{Lemma}
\newtheorem{proposition}[theorem]{Proposition}

\newtheorem{question}[theorem]{Question}
\newtheorem{conjecture}[theorem]{Conjecture}
\newtheorem*{recalledtheorem}{Theorem}

\theoremstyle{definition}

\newcommand{\R}{\mathbb{R}}
\newcommand{\N}{\mathbb{N}}
\newcommand{\Z}{\mathbb{Z}}

\newcommand{\Sc}{\operatorname{Sc}}
\newcommand{\dd}{\mathop{}\!d}

\newcommand{\Vol}{\operatorname{vol}}
\newcommand{\Ric}{\operatorname{Ric}}
\newcommand{\secop}{\operatorname{sec}}
\newcommand{\hyp}{\mathrm{hyp}}
\newcommand{\sph}{\mathbb{S}}

\title[A sharp hyperbolic volume bound]{A sharp hyperbolic volume bound for hypersurfaces in \(M^3\times \sph^1\)}
\author{Lizhi Chen}
\address[Lizhi Chen]{School of Mathematics and Statistics, Lanzhou University}
\email{lizhi.chen.math@gmail.com}
\thanks{The first author is partially supported by NSFC 12271225.}

\author{Kuntao Jin}
\address[Kuntao Jin]{Department of Mathematical Sciences, Tsinghua University}
\email{jkt25@mails.tsinghua.edu.cn}

\begin{document}

\begin{abstract}
Let $(M^3, g_{\mathrm{hyp}})$ be a closed oriented hyperbolic three-manifold normalized so that $\operatorname{sec}_{g_{\mathrm{hyp}}} \equiv -1$. We prove a sharp lower bound for the volume of hypersurfaces in $M^3 \times \mathbb{S}^1$ representing the slice class $[M^3 \times \{ \mathrm{pt} \}] \in H_3(M^3 \times \mathbb{S}^1; \mathbb{Z})$, and we classify the equality case. If $g$ is a smooth Riemannian metric on $M^3 \times \mathbb{S}^1$ with the scalar curvature $\operatorname{Sc}_g \geq -6$, then every closed embedded hypersurface $\Sigma$ representing the slice class $[M^3\times\{\mathrm{pt}\}]$ satisfies $\operatorname{vol}_g(\Sigma) \geq \operatorname{vol}_{g_{\mathrm{hyp}}}(M^3)$. The bound is attained by the product metric $g_{\mathrm{hyp}}+h$, with $h$ any metric on $\mathbb{S}^1$. Conversely, if equality holds for some $\Sigma$, then up to a diffeomorphism preserving the slice class, $g=g_{\mathrm{hyp}}+h$ and $\Sigma = M^3 \times \{\mathrm{pt}\}$. 
\end{abstract}

\maketitle

\section{Introduction}

Scalar curvature can be read infinitesimally from the volume growth of small
geodesic balls.  Indeed, if \((X^n,g)\) is a Riemannian manifold, then as
\(r\to0\),
\[
  \Vol_g(B_g(p,r))
  =
  \omega_n r^n
  \left(
    1-\frac{\Sc_g(p)}{6(n+2)}r^2+O(r^4)
  \right),
  \qquad r\to0,
\]
where \(\omega_n\) is the Euclidean unit \(n\)-ball volume. Thus a lower bound for \(\Sc_g\) gives, to second order, an upper bound for the volume of small geodesic balls. Motivated by this relation between scalar curvature and ball
volumes, Gromov proposed a finite-scale analogue, now called macroscopic scalar curvature.  This notion replaces the infinitesimal comparison above by a fixed-scale comparison on the universal cover (see \cite{GuthMetaphors} and
\cite{GuthVolumesBalls}).  More precisely, fix \(r>0\), let
\((\widetilde X,\widetilde g)\) be the Riemannian universal cover, and write
\[
  \widetilde V(\widetilde p,r)
  =
  \Vol_{\widetilde g}\bigl(B_{\widetilde g}(\widetilde p,r)\bigr).
\]
For a real number \(S\), let \(\widetilde V_S(r)\) be the volume of the radius \(r\) ball in the simply connected \(n\)-dimensional space form of scalar curvature \(S\).  Following Guth \cite{GuthMetaphors,GuthVolumesBalls},
the macroscopic scalar curvature at
scale \(r\) and at \(p\) is the number \(S\) determined by
\[
  \widetilde V(\widetilde p,r)=\widetilde V_S(r),
\]
where \(\widetilde p\) is any lift of \(p\). Therefore a lower bound by \(S\) at scale \(r\) is exactly the fixed-scale volume comparison
\[
  \Vol_{\widetilde g}\bigl(B_{\widetilde g}(\widetilde p,r)\bigr)
  \leq
  \widetilde V_S(r).
\]
At scale \(1\), this condition is a uniform upper bound for the volumes of
unit balls in \((\widetilde X,\widetilde g)\).  In terms of macroscopic scalar
curvature, the area theorem of Alpert--Funano can be stated as follows (see
\cite[Theorem~1.2]{AlpertFunano}).

\begin{recalledtheorem}
Let \((\Sigma^n,g_{\hyp})\) be an $n$-dimensional closed hyperbolic manifold, and set
\(X=\Sigma\times\sph^1\).  For every \(S\in\R\), there exists a constant
\(c=c(n,S)>0\) with the following property.  If \(g\) is a Riemannian metric
on \(X\) whose macroscopic scalar curvature at scale \(1\) is everywhere
bounded below by \(S\), then every smooth embedded hypersurface \(Z\subset X\)
representing the slice class
\([\Sigma\times\ast]\in H_n(X;\mathbb Z_2)\) satisfies
\[
  \Vol_g(Z)\geq c\,\Vol_{g_{\hyp}}(\Sigma).
\]
\end{recalledtheorem}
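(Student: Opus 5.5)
Since this is the theorem of Alpert--Funano \cite[Theorem~1.2]{AlpertFunano}, I would reprove it along the lines of Guth's argument in \cite{GuthVolumesBalls}, comparing with simplicial volume. The target inequality is the Gromov--Thurston bound $\|\Sigma\|\geq c_n\Vol_{g_{\hyp}}(\Sigma)$, which I would use in its $\mathbb Z_2$-coefficient form (or for the $\mathbb Z_2$-fundamental class). The plan is to show that any $Z$ representing the slice class mod $2$ yields a $\mathbb Z_2$-cycle in $\Sigma$, homologous to $[\Sigma]$, with at most $C(n,S)\Vol_g(Z)$ simplices. Then $\Vol_g(Z)\geq C^{-1}\|\Sigma\|_{\mathbb Z_2}\geq c\,\Vol_{g_{\hyp}}(\Sigma)$. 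The basic topological input is that the composition $Z\hookrightarrow X\to\Sigma$ has mod-$2$ degree $1$, because $[Z]=[\Sigma\times\ast]$.

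\textbf{Step 1 (regularize $Z$).} First replace $Z$ by a mod-$2$ homologically area-minimizing cycle in the same class, which only lowers $\Vol_g(Z)$. Existence and enough regularity come from geometric measure theory; a flat-chain cycle suffices for the counting below. \textbf{Step 2 (lower density bound).} For a minimizer one wants $\Vol_g(Z\cap B_g(z,r))\geq c_n r^n$ for every $z\in Z$ and a fixed small $r$, say $r=1/10$. Since no ambient curvature bound is assumed, monotonicity is unavailable. Instead I would use the minimizing property together with a coarea/isoperimetric argument in the spirit of Gromov's filling inequality. This estimate is where the work lies; see the last paragraph.

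\textbf{Step 3 (covering and nerve).} Choose a maximal $r$-separated set $\{z_i\}\subset Z$. By Step 2 the number of points is $N\leq C\Vol_g(Z)$. The balls $B_g(z_i,2r)$ cover $Z$. The macroscopic hypothesis $\Vol_{\widetilde g}(B_{\widetilde g}(\widetilde p,1))\leq\widetilde V_S(1)$ is used to bound how many lifted balls can meet a fixed one in the universal cover $\widetilde X=\widetilde\Sigma\times\R$. Packing $r$-separated points into a unit ball of $\widetilde X$ then bounds the multiplicity of the nerve $K$ by $C(n,S)$. Consequently the number of $n$-simplices of $K$ is at most $C(n,S)\,N$.

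\textbf{Step 4 (map to $\Sigma$).} Build the nerve map $\Phi\colon Z\to K$ from a partition of unity, and a map $\Psi\colon K\to\Sigma$ as follows. Send each vertex to the projection of a lift of $z_i$. Extend over the simplices simplex by simplex using asphericity of $\Sigma$: $\widetilde\Sigma=\mathbb H^n$ is contractible, so geodesic straightening works. The lifts are chosen consistently in $\widetilde X$, using that each ball lifts to a set of bounded diameter. This makes $\Psi\circ\Phi$ homotopic to the projection $Z\to\Sigma$. Then $(\Psi\circ\Phi)_*[Z]=[\Sigma]$ mod $2$, and $[Z]$ is pushed to a cycle supported on the $n$-skeleton of $K$. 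Hence $\|\Sigma\|_{\mathbb Z_2}\leq\#\{n\text{-simplices of }K\}\leq C\Vol_g(Z)$.

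The main obstacle is Step 2 together with the multiplicity count in Step 3. A macroscopic bound controls only the volumes of balls in $\widetilde X$, not the density of $Z$, and $Z$ could a priori be very thin. If a clean density estimate fails, I would fall back on Guth's ``rectangular nerve'' method from \cite{GuthVolumesBalls}. There one covers $\widetilde X$ rather than $Z$, at a scale chosen where $Z$ has small relative volume. One then deforms $Z$ into the $(n)$-skeleton of a good cover of $X$, whose combinatorics are controlled by the ball-volume bound, so that no lower density bound for $Z$ is needed.
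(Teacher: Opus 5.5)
The paper gives no proof of this statement. It is quoted from \cite[Theorem~1.2]{AlpertFunano} only as motivation, and the paper's own argument (pointwise $\Sc_g\ge -6$, stable minimal hypersurfaces, Yamabe invariants) never uses it. Your proposal therefore has to stand on its own, and as written it has a genuine gap.

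The topological scaffolding is sound. The map $p|_Z$ has mod-$2$ degree one. Straightening shows that any mod-$2$ cycle representing $[\Sigma]_{\Z_2}$ has at least $\Vol_{g_{\hyp}}(\Sigma)/v_n$ simplices, because the straightened simplices must cover $\Sigma$. An equivariant nerve map followed by an equivariant map to $\mathbb H^n$ is homotopic to $p|_Z$ by geodesic homotopy. This last step requires building the nerve $\pi_1(\Sigma)$-equivariantly, e.g.\ in $\widetilde\Sigma\times\sph^1$. A ball of $X$ need not be evenly covered, and in the full universal cover the lifts of your centres are not $r$-separated: for $g_{\hyp}+L^2dt^2$, which satisfies the hypothesis uniformly as $L\to0$, roughly $r/L$ lifted balls meet a given one. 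The real problem is elsewhere. The whole quantitative content, $\#\{n\text{-simplices}\}\le C(n,S)\Vol_g(Z)$, is drawn from Steps~2 and~3, and both are false.

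Both steps need \emph{lower} bounds on volumes of small balls. The macroscopic hypothesis is only an \emph{upper} bound on volumes of unit balls in $\widetilde X$ and says nothing about local geometry. To see the failure, modify $g_{\hyp}$ inside a tiny disc of $\Sigma$ by attaching a capped thin finger isometric to $[0,\ell]\times\sph^{n-1}_\delta$, and put $g=g_\Sigma+dt^2$.
\begin{itemize}
\item Every slice is still a mod-$2$ minimizer, since projection to $(\Sigma,g_\Sigma)$ is $1$-Lipschitz and has mod-$2$ degree one on every representative.
\item If $\delta^{n-1}\ell\to0$, the unit balls of $\widetilde X$ have volume bounded independently of $\delta$ and $\ell$, so one fixed $S$ works throughout.
\item At a point $z$ in the middle of the finger, $\Vol_g(Z\cap B_g(z,r))\approx 2r\,\Vol(\sph^{n-1}_\delta)\to0$, so Step~2 fails.
\item As $\ell\to\infty$, the finger carries $\gtrsim\ell/r$ of your separated points while contributing almost no area, so $N\le C\Vol_g(Z)$ fails.
\item Attaching $m$ such fingers to a common small disc produces $m$ pairwise $r$-separated points in one unit ball whose volume stays bounded. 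So the volume upper bound yields no packing estimate, and your multiplicity bound in Step~3 has no source.
\end{itemize}

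Your last paragraph names Guth's adaptive covering as a fallback, but that is where the proof actually lives. One must choose covering radii adapted to local volume growth. One must then show that the ambient $(n+1)$-dimensional ball bound controls how many $n$-cells a cycle of area $\Vol_g(Z)$ must occupy after being pushed into the skeleton. This conversion of an ambient volume bound into a hypersurface area bound is the real content of the theorem, and the proposal does not supply it.
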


The Alpert--Funano theorem naturally suggests replacing its macroscopic hypothesis by the pointwise scalar curvature lower bound of the hyperbolic product. This leads to the following sharp volume and rigidity question.

\begin{question}[Scalar curvature analogue]
Let \((M^n,g_{\hyp})\) be a closed oriented hyperbolic \(n\)-manifold with \(\secop_{g_{\hyp}}\equiv -1\), and let \(g\) be a Riemannian metric on \(M^n\times\sph^1\) satisfying
\[
  \Sc_g\geq -n(n-1).
\]
Does every volume-minimizing representative \(Z\) of the slice class
\( [M^n\times \{pt\}] \in H_n(M^n \times \sph^1; \Z) \) satisfy
\[
  \Vol_g(Z)\geq \Vol_{g_{\hyp}}(M)?
\]
Moreover, does equality hold if and only if, for some metric \(h\) on
\(\sph^1\),
\[
  (M^n\times\sph^1,g)
  \quad\text{is isometric to}\quad
  (M^n,g_{\hyp})\times(\sph^1,h)?
\]
\end{question}

\bigskip

Our main result proves this scalar curvature analogue in dimension three.

\begin{theorem}\label{thm:intro-main}
Let \((M^3,g_{\hyp})\) be a closed oriented hyperbolic three-manifold with \(\secop_{g_{\hyp}}\equiv -1\), and let \(g\) be a smooth Riemannian metric on \(M^3\times\sph^1\) satisfying
\[
  \Sc_g\geq -6.
\]
Then every closed embedded hypersurface \(\Sigma\) representing the slice class \( [M^3 \times \{ pt\}] \in H_3(M^3 \times \sph^1; \Z) \) satisfies
\[
  \Vol_g(\Sigma)\geq \Vol_{g_{\hyp}}(M).
\]
Moreover, equality holds if and only if, up to a diffeomorphism preserving the slice class, for some flat metric \(h\) on \(\sph^1\), \(  (M^3\times\sph^1,g) \) is isometric to \( (M^3,g_{\mathrm{\hyp}})\times(\sph^1,h) \).
\end{theorem}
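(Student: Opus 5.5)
The plan is to minimize volume in the slice class, use stability of the minimizer to put a metric on it with nonnegative \(\Sc+6\) in a weighted sense, and then invoke a sharp volume-entropy/\(\mu\)-bubble comparison on the three-dimensional minimizer.

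\textbf{Step 1 (minimization).} Since the slice class is nonzero in \(H_3(M^3\times\sph^1;\Z)\) and \(\dim=4\le 7\), geometric measure theory gives a smooth closed embedded oriented area-minimizing hypersurface \(\Sigma_0\) in the class with \(\Vol_g(\Sigma_0)\le\Vol_g(\Sigma)\). It therefore suffices to prove \(\Vol_g(\Sigma_0)\ge\Vol_{g_\hyp}(M)\). Some component \(\Sigma_1\) of \(\Sigma_0\) has a projection \(\Sigma_1\to M\) of nonzero degree. Indeed, the projection \(\Sigma_0\to M\) has degree one, since the slice class pairs to \(1\) with the pullback of the fundamental cocycle.

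\textbf{Step 2 (stability and conformal/warped reduction).} Stability gives, for all \(\varphi\),
\[
\int_{\Sigma_1}|\nabla\varphi|^2-(\Ric(\nu,\nu)+|A|^2)\varphi^2\ge0 .
\]
By Schoen--Yau's rearrangement,
\[
\Ric(\nu,\nu)+|A|^2=\tfrac12\bigl(\Sc_g-\Sc_{\Sigma_1}+|A|^2\bigr),
\]
and \(\Sc_g\ge-6\). Taking the first eigenfunction \(u>0\), the warped metric \(\hat g=g_{\Sigma_1}+u^2dt^2\) on \(\Sigma_1\times\sph^1\) satisfies
\[
\Sc_{\hat g}=\Sc_{\Sigma_1}-2u^{-1}\Delta u\ge-6+|A|^2+2\lambda_1\ge-6 .
\]
Equivalently, the operator \(-\Delta+\tfrac12\Sc_{\Sigma_1}+3\) is nonnegative on \(\Sigma_1\), with a weight \(u\).

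\textbf{Step 3 (sharp comparison in dimension three).} We need the statement that a closed \(3\)-manifold \(N\) admitting a degree-nonzero map to hyperbolic \(M\), together with a positive \(u\) such that \(-\Delta u+\tfrac12\Sc_N u\ge-3u\), satisfies \(\Vol(N)\ge\Vol_{g_\hyp}(M)\), with equality only if \(N\) is hyperbolic and \(u\) is constant. Without the weight this is the sharp Perelman/Ricci-flow bound: \(\Sc\ge-6\) implies \(\Vol\ge\Vol_\hyp\), via the monotonicity of \(\lambda\,\Vol^{2/3}\) through Ricci flow with surgery and Gromov simplicial volume for the degree. With the spectral weight I would use the \(u\)-weighted version of that inequality. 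This follows either by the spectral Ricci-flow argument of the type in Anderson's or Antoine Song's works, or more cleanly by noting that the \(\sph^1\)-invariant warped product \(\hat g\) enters a \(4\)-dimensional setting we want to avoid. So I would instead use Perelman's \(\lambda\)-functional: define
\[
\lambda(N)=\inf_\varphi\int\bigl(4|\nabla\varphi|^2+\Sc\,\varphi^2\bigr).
\]
The stability inequality with the Schoen--Yau identity gives
\[
\int\bigl(2|\nabla\varphi|^2+\Sc_N\varphi^2\bigr)\ge-6\int\varphi^2 .
\]
Since \(4\ge2\), this yields \(\lambda(N)\ge-6\) after normalizing \(\int\varphi^2=1\). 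Then Perelman's theorem (\(\lambda\,\Vol^{2/3}\le-6\,\Vol_\hyp^{2/3}\) for \(\lambda<0\), combined with the degree bound) gives \(\Vol(\Sigma_1)\ge\Vol_{g_\hyp}(M)\). I expect this step to be the main obstacle: making the degree-nonzero version of Perelman's \(\bar\lambda\) inequality precise. That relies on the simplicial-volume input \(\|\Sigma_1\|\ge|\deg|\,\|M\|\) and on the hyperbolic pieces of the geometrization of \(\Sigma_1\).

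\textbf{Step 4 (rigidity).} In the equality case all inequalities are equalities. First, \(\Sigma_1=\Sigma_0\) is hyperbolic and \(\lambda=-6\), so \(|\nabla\varphi|\equiv0\) and \(A\equiv0\). It also follows that \(\Ric(\nu,\nu)\equiv0\) and \(\Sc_g=-6\) along \(\Sigma_0\), and that the Jacobi operator has kernel. Mostow rigidity identifies \(\Sigma_0\) isometrically with \((M,g_\hyp)\) via a map homotopic to projection. A standard foliation argument, in the style of Bray--Brendle--Neves, then applies: constant-mean-curvature or minimal deformations give a local foliation by hypersurfaces of the same volume, hence all of them are minimizing and totally geodesic. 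The unit normal field is then parallel, so \(g\) is locally a product. Continuing this around the \(\sph^1\) direction yields a global splitting \(g=g_\hyp+h\) up to a diffeomorphism preserving the slice class.
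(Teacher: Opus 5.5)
Your proof of the inequality is correct and is essentially the paper's argument. The paper feeds the stability estimate into the Yamabe quotient (via H\"older) and uses $Y\le\sigma(\Sigma_{j_0})=-6V_{\hyp}(\Sigma_{j_0})^{2/3}$. You use Perelman's $\lambda$ and $\lambda V^{2/3}\le-6V_{\hyp}^{2/3}$ instead, which rests on the same Perelman/Anderson/Akutagawa--Ishida--LeBrun computation together with $\|\Sigma_1\|\ge|d|\,\|M\|$.

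The gap is in Step 4, where the two substantive rigidity steps are asserted rather than proved. The CMC foliation $\{\Sigma_t\}$ exists because the Jacobi operator is $\Delta_\Sigma$. However, nothing you write shows its leaves have the same volume, and that is the whole content of the local splitting. The missing idea is to apply the sharp bound of Step 3 to every leaf; this is legitimate because each $\Sigma_t$ is isotopic to $\Sigma$ and so maps to $M$ with degree $\pm1$. Combine this with the lapse equation $H'(t)=-\Delta u-(\Ric(\nu_t,\nu_t)+|A_t|^2)u$ and the Gauss equation with $H\neq0$. This gives $(-\Delta+\tfrac12\Sc_{\Sigma_t}+3)u\ge H'(t)$. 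Hence if $H'(t)>0$ then $\lambda(\Sigma_t)\ge-6+2H'(t)/\max u$, and Perelman's bound yields $H'(t)\le C\bigl(\Vol_g(\Sigma_t)-\Vol_g(\Sigma_0)\bigr)$.

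This does not close by the sign argument of Bray--Brendle--Neves. There, the topological bound is an upper area bound that opposes minimality. Here, both the leafwise bound and minimality bound $\Vol_g(\Sigma_t)$ from below. So you need an integrated, Gronwall-type argument using $H(0)=0$ and $\Vol_g(\Sigma_t)\ge\Vol_g(\Sigma_0)$ to force $\Vol_g(\Sigma_t)\equiv\Vol_g(\Sigma_0)$. This is the Nunes/Moraru local area comparison theorem that the paper invokes.

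``Continuing around the $\sph^1$ direction'' also hides two steps. First, the continuation must rule out a bad limiting leaf. One shows the limit is again an embedded, multiplicity-one, minimizing equality leaf disjoint from the earlier leaves, so the local splitting restarts there. Second, the outcome is only a mapping torus $\Sigma\times[0,L]/(x,0)\sim(\varphi(x),L)$ of some isometry $\varphi$ of $(\Sigma,g_{\hyp})$. This is not a Riemannian product unless $\varphi=\mathrm{id}$, because the first-return map of the parallel normal flow would be $\varphi$. The paper settles this by noting that the fibration class is the slice-dual class. Hence $\varphi_*$ is inner on $\pi_1(\Sigma)$, so $\varphi\simeq\mathrm{id}$, and $\varphi=\mathrm{id}$ by Mostow rigidity.

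Two smaller points. Concluding that $\Sigma$ is hyperbolic from equality needs the fact that a metric realizing $\sigma(\Sigma)\le0$ (equivalently $\bar\lambda$) is Einstein (Schoen). Also, a degree $\pm1$ map between closed hyperbolic manifolds of equal volume is homotopic to an isometry by Gromov--Thurston or Besson--Courtois--Gallot rigidity, not by Mostow alone, since Mostow needs a homotopy equivalence.
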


\bigskip

The proof uses a sharp three-dimensional Yamabe estimate for manifolds admitting a nonzero-degree map to a closed hyperbolic three-manifold.  The stability inequality for a minimizing hypersurface, together with the Gauss equation, give a lower bound for the Yamabe constant of the induced conformal class.  On a component mapping to \(M\) with nonzero degree, the degree Yamabe estimate gives the opposite inequality. Comparing these two inequalities gives the volume lower bound. The equality case then forces equality in the stability and Yamabe estimates; the remaining step is the passage from local splitting near the minimizing hypersurface to global splitting. The higher-dimensional analogue is discussed in Section~\ref{sec:higher-dimensional}.

\bigskip

\noindent \textbf{On the use of AI.} In preparing this work, the authors used ChatGPT (GPT-5.5 and GPT-5.6 Sol) to explore possible proof strategies and test their own arguments, and to improve the exposition and language. All mathematical statements and proofs were formulated by the authors, who independently verified every argument and take full responsibility for this paper.

\bigskip

\section{Background and scalar curvature comparison}

We recall two standard notions used in the proof: the Yamabe invariant and simplicial volume. The first is a conformal invariant defined from scalar curvature, while the second is a topological invariant closely related to hyperbolic volume. In dimension three these two invariants give the sharp degree estimate used below.

Let \(X^n\) be a closed smooth manifold, \(n\geq3\).  For a conformal class \([h]\), the Yamabe constant is
\[
  Y(X,[h]) =  \inf_{\varphi\in C^\infty(X),\ \varphi \not\equiv 0} \frac{  \displaystyle \int_X
  \left( \frac{4(n-1)}{n-2}|\nabla\varphi|^2+\Sc_h\varphi^2
  \right) \dd \mu_h }{ \displaystyle
  \left(\int_X|\varphi|^{\frac{2n}{n-2}} \dd\mu_h \right)^{\frac{n-2}{n}} }.
\]
Moreover, the Yamabe invariant of \(X\) is
\[
  \sigma(X)=\sup_{[h]}Y(X,[h]),
\]
where the supremum is taken over all conformal classes on \(X\). Thus \(Y(X,[h])\) is the scalar curvature invariant of one conformal class, while \(\sigma(X)\) is obtained by optimizing over all conformal classes.

Schoen--Yau discuss the problem of determining the optimal Yamabe constant of a closed hyperbolic manifold.  In the normalization used here, the expected answer can be written in the following form.

\begin{conjecture}[Schoen's Yamabe invariant conjecture, {\cite{SchoenVariational}}]\label{conj:schoen-yamabe}
Let \((M^n,g_{\hyp})\) be a closed oriented $n$-dimensional hyperbolic manifold normalized by \(\secop_{g_{\hyp}}\equiv -1\).  Then the hyperbolic metric realizes the Yamabe invariant, namely
\[
  \sigma(M)  =  -n(n-1)\Vol_{g_{\hyp}}(M)^{2/n}.
\]
\end{conjecture}

Schoen's hyperbolic volume conjecture is the corresponding scalar curvature form of this problem: it asks whether scalar curvature alone recovers the hyperbolic volume of \(M\). For the equivalence between Conjecture~\ref{conj:schoen-yamabe} and the volume comparison below, see
Yuan \cite{YuanVolumeComparison}.

\begin{conjecture}[Schoen's hyperbolic volume conjecture, {\cite{SchoenVariational}}]\label{conj:schoen-volume}
Let \((M^n,g_{\hyp})\) be a closed oriented $n$-dimensional hyperbolic manifold normalized by \(\secop_{g_{\hyp}}\equiv -1\).  If \(g\) is another Riemannian metric on \(M\) satisfying
\[
  \Sc_g\geq -n(n-1),
\]
then
\[
  \Vol_g(M)\geq \Vol_{g_{\hyp}}(M).
\]
Moreover, equality holds if and only if \(g\) is hyperbolic.
\end{conjecture}

This is the scalar curvature analogue of the minimal entropy theorem of Besson--Courtois--Gallot (see \cite{BCGMinimalEntropy}).  A macroscopic version was studied by Balacheff--Karam \cite{BalacheffKaram}.

The topological counterpart is simplicial volume.  Let \(X\) be a closed oriented \(n\)-manifold.  For a real singular \(n\)-chain
\[
  c=\sum_{i=1}^N a_i\sigma_i\in C_n(X;\mathbb R),
\]
set
\[
  |c|_1=\sum_{i=1}^N |a_i|.
\]
The simplicial volume of \(X\) is the \(\ell^1\)-seminorm of its real fundamental class:
\[
  \|X\|  =  \inf\bigl\{ |c|_1:   c\in C_n(X;\mathbb R),\ \partial c=0,\ [c]=[X]_{\mathbb R}  \bigr\}.
\]
Here \([X]_{\mathbb R}\in H_n(X;\mathbb R)\) denotes the real fundamental class. For a closed oriented hyperbolic \(n\)-manifold \((M^n,g_{\hyp})\), normalized by \(\secop_{g_{\hyp}}\equiv -1\), by the Gromov proportionality principle (see \cite{GromovVolumeBound} and \cite{ThurstonGeometry}), one has
\[
  \|M\|=\frac{\Vol_{g_{\hyp}}(M)}{v_n},
\]
where \(v_n\) is the volume of the regular ideal \(n\)-simplex in
\(\mathbb H^n\).  In this sense, the hyperbolic volume is also a topological quantity. Simplicial volume is a homotopy invariant of closed oriented
manifolds, and Gromov's degree monotonicity \cite[p.~8]{GromovVolumeBound} says that if \(f:X\to M\) has
degree \(d\), then
\[
  \|X\|\geq |d|\,\|M\|.
\]

Our theorem is a codimension-one version of this comparison for \(M^3\times\sph^1\). Instead of estimating the volume of the whole ambient manifold, we estimate the least volume in the slice class.  The analytic bridge is the Yamabe invariant.

We shall use the following three-dimensional consequence of known results.
It is not a separate theorem of a single paper in precisely this form.  Rather,
it follows by combining the computation of the Yamabe invariant of closed
three-manifolds via geometrization (see \cite{AndersonCanonical} and
\cite{AILPerelmanYamabe}) with the Gromov proportionality principle for the relation between simplicial volume and hyperbolic volume.

\begin{lemma}\label{thm:yamabe-degree}
Let \((M^3,g_{\hyp})\) be a closed oriented hyperbolic three-manifold normalized by
\(\secop_{g_{\hyp}}\equiv -1\).  Let \(X\) be a closed oriented
three-manifold.  If \(f:X\to M\) is a smooth map of degree \(d\neq0\), then
\[
  \sigma(X)
  \leq
  -6\left(|d|\Vol_{g_{\hyp}}(M)\right)^{2/3}.
\]
\end{lemma}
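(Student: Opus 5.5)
We will combine three known facts: the geometrization computation of the Yamabe invariant of closed three-manifolds, the Gromov proportionality principle, and Gromov's degree monotonicity for simplicial volume. The key known input (Anderson \cite{AndersonCanonical}, Agol--Storm--Thurston/Perelman as in \cite{AILPerelmanYamabe}) is the following. For a closed oriented three-manifold \(X\) with \(\sigma(X)\le 0\),
\[
  \sigma(X) = -6\,\Vol_{g_{\hyp}}(X_{\hyp})^{2/3},
\]
where \(X_{\hyp}\) denotes the union of the hyperbolic pieces in the geometric decomposition of \(X\), normalized with \(\secop\equiv-1\). We also use that hyperbolic volume of the pieces is captured by simplicial volume: by Gromov--Soma, \(\|X\| = \Vol(X_{\hyp})/v_3\). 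Since \(\|X\|\) is additive under connected sum and under gluing along incompressible tori, only the hyperbolic pieces contribute.

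The argument proceeds in the following steps.

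\begin{enumerate}[label=(\arabic*)]
  \item \textbf{Lower bound on \(\|X\|\).} Since \(f\) has degree \(d\neq0\), Gromov's degree monotonicity gives
  \[
    \|X\| \ge |d|\,\|M\| = |d|\,\Vol_{g_{\hyp}}(M)/v_3,
  \]
  using the proportionality principle for \(M\). In particular \(\|X\|>0\).

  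\item \textbf{Sign of \(\sigma(X)\).} We first argue that \(X\) does not admit positive scalar curvature. A PSC three-manifold is, by Perelman/Schoen--Yau--Gromov--Lawson, a connected sum of spherical space forms and \(\sph^2\times\sph^1\)'s, and every such manifold has zero simplicial volume. Hence \(\sigma(X)\le 0\).

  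\item \textbf{Identifying \(\sigma(X)\) with the hyperbolic part.} With \(\sigma(X)\le 0\), the Anderson/Perelman formula applies. Combining it with Soma's identity \(\Vol(X_{\hyp}) = v_3\|X\|\) gives
  \[
    \sigma(X) = -6\,(v_3\|X\|)^{2/3}.
  \]

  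\item \textbf{Conclusion.} Inserting the bound from step (1) into step (3) yields
  \[
    \sigma(X) \le -6\,\bigl(|d|\,\Vol_{g_{\hyp}}(M)\bigr)^{2/3},
  \]
  since \(t\mapsto -6t^{2/3}\) is decreasing.
\end{enumerate}

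The main obstacle is stating the Perelman--Anderson formula in exactly the form needed. One issue is that \(X\) may be nonprime, or may have graph-manifold pieces that carry no hyperbolic volume. Another is the case \(\sigma(X)=0\) with \(\|X\|=0\), which step (1) excludes.

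If the literature formula is available only as \(\sigma(X)\le -6\,\Vol(X_{\hyp})^{2/3}\) (the direction from Perelman's monotonicity of \(\Vol\cdot \lambda^{3/2}\) under Ricci flow with surgery), that inequality is exactly the direction we need, so it suffices. This bound follows from Perelman's estimate: for any metric with \(\lambda\)-invariant negative, \(\Vol\) is bounded below via the hyperbolic pieces at the end of the flow. We would then cite Soma's theorem, that the simplicial volume equals \(\Vol(X_{\hyp})/v_3\), to pass to \(\|X\|\) and conclude.
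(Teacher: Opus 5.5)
Your proposal is correct and follows the paper's proof essentially step by step. Both use degree monotonicity and proportionality to get $\|X\|\geq |d|\,\|M\|>0$, deduce that $X$ carries no positive scalar curvature metric so $\sigma(X)\leq 0$, and then apply the Anderson--Perelman formula $\sigma(X)=-6\,V_{\hyp}(X)^{2/3}$ with $V_{\hyp}(X)=v_3\|X\|$. Your two additions are valid refinements of points the paper leaves implicit: you justify $\sigma(X)\leq 0$ through the classification of positive scalar curvature three-manifolds, and you observe that only the upper bound $\sigma(X)\leq -6\,V_{\hyp}(X)^{2/3}$ is actually needed.
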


\begin{proof}
Let \(V_{\hyp}(X)\) be the sum of the volumes of the hyperbolic pieces in the geometric decomposition of \(X\). By Gromov additivity theorem (see {\cite[p.~58]{GromovVolumeBound}}, also see {\cite[Section 7.4]{Frigerio2017}}) and Gromov proportionality principle,  
\[
  V_{\hyp}(X)=v_3\|X\|.
\]
Here \(v_3\) is the volume of a regular ideal tetrahedron in \(\mathbb H^3\). For the hyperbolic manifold \(M\),
\[
  \Vol_{g_{\hyp}}(M)=v_3\|M\|.
\]
By degree monotonicity of simplicial volume (see {\cite[p.~8]{GromovVolumeBound}}),
\[
  V_{\hyp}(X)  =  v_3\|X\| \geq |d|v_3\|M\|
  = |d|\Vol_{g_{\hyp}}(M).
\]
In particular \(V_{\hyp}(X)>0\). Then $X$ admits no metrics of positive scalar curvature, thus $\sigma(X) \leq 0$. The computation of the Yamabe invariant of closed three-manifolds gives
\[
  \sigma(X)=-6V_{\hyp}(X)^{2/3}
\]
in this case (see \cite{AndersonCanonical} and
\cite{AILPerelmanYamabe}).  Therefore
\[
  \sigma(X)  \leq  -6\left(|d|\Vol_{g_{\hyp}}(M)\right)^{2/3}.
\]
\end{proof}

\bigskip

\section{Proof of the Main Theorem}
In this section, we prove the main theorem (Theorem~\ref{thm:intro-main}).
\begin{recalledtheorem}
Let \((M^3,g_{\hyp})\) be a closed oriented hyperbolic three-manifold with \(\secop_{g_{\hyp}}\equiv -1\), and let \(g\) be a smooth Riemannian metric on \(M^3\times\sph^1\) satisfying \(\Sc_g\geq -6\).  Then every closed embedded hypersurface \(\Sigma\) representing the slice class \( [M^3 \times \{ pt\}] \in H_3(M^3\times \sph^1; \Z) \) satisfies
\[
  \Vol_g(\Sigma)\geq \Vol_{g_{\hyp}}(M).
\]
Moreover, equality holds if and only if, up to a diffeomorphism preserving the slice class, \(g\) splits as \(g_{\hyp}+h_{\sph^1}\).
\end{recalledtheorem}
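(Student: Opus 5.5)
We argue by reduction to a minimizer. Since the ambient dimension is four, geometric measure theory gives a smooth closed embedded hypersurface \(\Sigma_0\) minimizing volume in the slice class \([M\times\{pt\}]\in H_3(M\times\sph^1;\Z)\). Then \(\Vol_g(\Sigma)\geq\Vol_g(\Sigma_0)\) for every representative \(\Sigma\), so it suffices to prove the bound for \(\Sigma_0\). The class is nontrivial and \(\Sigma_0\) is oriented and two-sided. Write \(\Sigma_0=\bigcup_i\Sigma_i\) as a union of components. The projection \(p:M\times\sph^1\to M\) restricted to \(\Sigma_0\) has total degree \(\sum_i\deg(p|_{\Sigma_i})=1\), so some component \(\Sigma_1\) has \(d=\deg(p|_{\Sigma_1})\neq0\). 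Each component of a minimizer is itself stable, and in fact area-minimizing among compactly supported deformations.

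\textbf{Stability estimate.} On \(\Sigma_1\) with induced metric \(\gamma\), use the stability inequality
\[
\int_{\Sigma_1}|\nabla\varphi|^2\geq\int_{\Sigma_1}(\Ric_g(\nu,\nu)+|A|^2)\varphi^2 .
\]
Rewrite it through the Gauss equation \(\Ric(\nu,\nu)+|A|^2=\tfrac12(\Sc_g-\Sc_\gamma+|A|^2)\), using \(H=0\). With \(\Sc_g\geq-6\) this gives
\[
\int_{\Sigma_1}\bigl(2|\nabla\varphi|^2+\Sc_\gamma\varphi^2\bigr)\geq\int_{\Sigma_1}(-6+|A|^2)\varphi^2\geq-6\int_{\Sigma_1}\varphi^2 .
\]
Since \(2\leq 8=\tfrac{4(n-1)}{n-2}\) for \(n=3\), the Yamabe numerator is bounded below by \(-6\int\varphi^2\). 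Hölder gives
\[
\int\varphi^2\leq\Vol(\Sigma_1)^{2/3}\Bigl(\int\varphi^6\Bigr)^{1/3},
\]
so \(Y(\Sigma_1,[\gamma])\geq-6\Vol_\gamma(\Sigma_1)^{2/3}\). Lemma~\ref{thm:yamabe-degree} gives the opposite bound
\[
Y(\Sigma_1,[\gamma])\leq\sigma(\Sigma_1)\leq-6(|d|\Vol_{g_{\hyp}}(M))^{2/3}.
\]
Combining the two yields \(\Vol(\Sigma)\geq\Vol(\Sigma_0)\geq\Vol(\Sigma_1)\geq|d|\Vol_{g_{\hyp}}(M)\geq\Vol_{g_{\hyp}}(M)\).

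\textbf{Equality: infinitesimal rigidity.} If equality holds for some \(\Sigma\), then \(\Sigma\) is itself a minimizer. All the inequalities above are then equalities, which forces the following.
\begin{itemize}
\item \(\Sigma\) is connected with \(|d|=1\).
\item \(\sigma(\Sigma)=Y(\Sigma,[\gamma])\), attained by a constant \(\varphi\) since the Hölder step is sharp. Hence \(\gamma\) is a Yamabe metric of constant scalar curvature \(-6\) realizing \(\sigma(\Sigma)=-6V_{\hyp}(\Sigma)^{2/3}\).
\item \(A\equiv0\), \(\Sc_g=-6\) along \(\Sigma\), and \(\Ric(\nu,\nu)=0\). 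The constants lie in the kernel of the Jacobi operator.
\end{itemize}
By the equality case in the Anderson/Perelman computation, a metric realizing \(\sigma(X)\) with \(V_{\hyp}(X)>0\) is hyperbolic and \(X\) is hyperbolic. So \((\Sigma,\gamma)\) is hyperbolic with volume \(\Vol_{g_{\hyp}}(M)\). Since \(p|_\Sigma\) has degree one between hyperbolic 3-manifolds of equal volume, Gromov--Thurston strict rigidity makes it homotopic to an isometry by Mostow. This gives the identification \((\Sigma,\gamma)\cong(M,g_{\hyp})\) after a slice-class-preserving diffeomorphism.

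\textbf{Main obstacle: local-to-global splitting.} I expect this step to be the hardest. First I would build a local foliation by constant mean curvature leaves \(\Sigma_t\) near \(\Sigma\), graphs of \(u_t\nu\), via the implicit function theorem. The Jacobi operator \(-\Delta\) has kernel the constants, so one solves with a projection onto the constants. A Bray--Brendle--Neves type argument then shows that \(H(t)\) has a sign forcing every leaf to have volume \(\leq\Vol(\Sigma)\); this uses the Gauss equation and the Yamabe bound applied to each leaf, which also has degree one. Minimality then makes each leaf a minimizer achieving equality. So every leaf is totally geodesic and hyperbolic, the lapse is constant on each leaf, and \(g=\gamma+\phi(t)^2dt^2\) near \(\Sigma\). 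The set of leaves achieving equality is open by this argument and closed by compactness of minimizers. Continuing the foliation around the \(\sph^1\) factor, which is possible because the class is nontrivial and the leaves are disjoint minimizers, sweeps out all of \(M\times\sph^1\). This gives a global isometry with \((M,g_{\hyp})\times(\sph^1,h)\).

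The converse is immediate. The product metric has \(\Sc=-6\), and each slice has volume exactly \(\Vol_{g_{\hyp}}(M)\).
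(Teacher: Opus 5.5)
Up to the final sweep-out, your argument is the paper's:
\begin{itemize}
\item a minimizer in the slice class;
\item stability plus the Gauss equation, giving $Y(\Sigma_1,[\gamma])\geq-6\Vol(\Sigma_1)^{2/3}$;
\item Lemma~\ref{thm:yamabe-degree} for the reverse bound;
\item in the equality case, a connected, totally geodesic, hyperbolic leaf of degree $\pm1$;
\item a CMC foliation from the implicit function theorem;
\item a Bray--Brendle--Neves/Moraru comparison using $Y(\Sigma_t,[g_t])\leq\sigma(\Sigma)$ on each leaf.
\end{itemize}
Your citations differ only cosmetically. The paper gets hyperbolicity from Schoen's theorem that a Yamabe metric realizing $\sigma\leq 0$ is Einstein, and it uses the Besson--Courtois--Gallot equality case where you use Gromov--Thurston. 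Also, the minimizer should be a current $\sum_j m_j[\Sigma_j]$ with $\sum_j m_j\deg(p|_{\Sigma_j})=1$; this is harmless for the bound and is excluded in the equality case by the mass count.

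The genuine gap is your last sentence, ``this gives a global isometry with $(M,g_{\hyp})\times(\sph^1,h)$.'' Cutting $N$ along $\Sigma$ and filling the resulting cobordism by a product region $\Sigma\times[0,L]$ with metric $g_{\hyp}+dt^2$ shows less than that. It only shows that $(N,g)$ is the mapping torus of some isometry $\varphi$ of $(\Sigma,g_{\hyp})$, namely the return map of the normal geodesics. If $\varphi\neq\mathrm{id}$, that metric is only locally a product, and no diffeomorphism turns it into $g_{\hyp}+h$.

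To close this you need the topological argument the paper gives:
\begin{enumerate}
\item $\Sigma$ represents the class dual to the $\sph^1$-factor. So the kernel of the associated map $\pi_1(N)\to\Z$ is both the fiber group $\pi_1(\Sigma)$ of $\pi_1(\Sigma)\rtimes_{\varphi_*}\Z$ and the factor $\pi_1(M)$ of $\pi_1(M)\times\Z$.
\item Comparing the conjugation action of a stable letter in the two descriptions shows that $\varphi_*$ is inner.
\item Since $\Sigma$ is aspherical, $\varphi$ is homotopic to the identity.
\item By Mostow rigidity, $\mathrm{Isom}(\Sigma,g_{\hyp})\to\mathrm{Out}(\pi_1(\Sigma))$ is injective, so $\varphi=\mathrm{id}$.
\end{enumerate}

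Relatedly, ``closed by compactness of minimizers'' compresses real work. The limit of the product slices is a priori only an immersed, possibly multiply covered, leaf. The paper has to show two things before the local splitting can be restarted there. First, the limit leaf is embedded: a $k$-sheeted cover with equal volumes forces $k=1$. Second, it is disjoint from all earlier leaves, which uses the maximum principle plus injectivity of the normal coordinate.
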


\begin{proof}
Put \(N=M^3\times\sph^1\), and let
\[
  \alpha=[M^3\times\{\mathrm{pt}\}]\in H_3(M^3\times \sph^1;\Z).
\]
We prove the slightly stronger current version of the lower bound. Choose an integral current \(T\) minimizing mass in the class \(\alpha\).  By regularity for codimension-one mass-minimizing currents in ambient dimension \(4\), \(T\) is represented by a smooth embedded oriented stable minimal hypersurface, possibly disconnected and with integer multiplicities (see \cite{FedererFleming} and \cite{SimonGMT}).  Thus
\[
  T=\sum_j m_j \llbracket \Sigma_j \rrbracket ,
\]
where each \(\Sigma_j\) is a connected smooth closed oriented stable minimal hypersurface and \(m_j\in\N\).  Its mass is
\[
  \mathbf M(T)=\sum_j m_j\Vol_g(\Sigma_j).
\]

We first prove the estimate supplied by stability.  Let \(\Gamma\) be one of the components \(\Sigma_j\), and let \(\nu\) be a unit normal along
\(\Gamma\).  We use the Schoen--Yau stable hypersurface argument: combine the second variation inequality with the Gauss equation (see \cite{SchoenYauLectures}).  For every \(\varphi\in C^\infty(\Gamma)\), stability gives
\[
  \int_\Gamma
  \left(
  |\nabla \varphi|^2
  -
  \bigl(\Ric_N(\nu,\nu)+|A|^2\bigr)\varphi^2
  \right)
  \geq 0.
\]
The Gauss equation for a minimal hypersurface is
\[
  \Sc_\Gamma  =  \Sc_N  -  2\Ric_N(\nu,\nu)  -  |A|^2.
\]
Hence
\[
  \Ric_N(\nu,\nu)+|A|^2  =  \frac12\left(\Sc_N-\Sc_\Gamma+|A|^2\right).
\]
Substitution in the stability inequality gives
\[
  \int_\Gamma \left(  2|\nabla\varphi|^2+\Sc_\Gamma\varphi^2  \right)  \geq  \int_\Gamma  \left(  \Sc_N+|A|^2  \right)\varphi^2.
\]
Since \(\Sc_N=\Sc_g\geq -6\), it follows that
\[
  \int_\Gamma  \left(  2|\nabla\varphi|^2+\Sc_\Gamma\varphi^2  \right)  \geq  -6\int_\Gamma \varphi^2.
\]

In dimension \(3\), the Yamabe quotient of the induced metric is
\[
  Q_\Gamma(\varphi)
  =
  \frac{
  \displaystyle
  \int_\Gamma
  \left(
  8|\nabla\varphi|^2+\Sc_\Gamma\varphi^2
  \right)
  }{
  \displaystyle
  \left(\int_\Gamma |\varphi|^6\right)^{1/3}
  }.
\]
Because
\[
  8|\nabla\varphi|^2+\Sc_\Gamma\varphi^2  =  \left(  2|\nabla\varphi|^2+\Sc_\Gamma\varphi^2  \right)  +  6|\nabla\varphi|^2,
\]
we have
\[
  \int_\Gamma
  \left(
  8|\nabla\varphi|^2+\Sc_\Gamma\varphi^2
  \right)
  \geq
  -6\int_\Gamma \varphi^2.
\]
H\"older's inequality gives
\[
  \int_\Gamma \varphi^2
  \leq
  \Vol_g(\Gamma)^{2/3}
  \left(\int_\Gamma |\varphi|^6\right)^{1/3}.
\]
Therefore
\[
  Q_\Gamma(\varphi)\geq -6\Vol_g(\Gamma)^{2/3}.
\]
Taking the infimum over all nonzero \(\varphi\), we obtain
\[
  Y(\Gamma,[g_\Gamma])
  \geq
  -6\Vol_g(\Gamma)^{2/3}.
\]
This estimate holds for every component \(\Gamma=\Sigma_j\).

Now denote by
\[
  p\colon M^3\times \sph^1\longrightarrow M^3
\]
the projection. 
Since \(T\) represents the homology class
\([M^3\times\{\mathrm{pt}\}]\), pushing forward by \(p\) gives
\[
   [p_\#T] = [M^3].
\]
Equivalently,
\[
  \sum_j m_j\deg(p|_{\Sigma_j})=1.
\]
Hence some component has nonzero degree over \(M\).  Fix such a component and
write it as \(\Sigma_{j_0}\).  Set
\[
  d=\deg(p|_{\Sigma_{j_0}})\neq0.
\]
Applying Lemma~\ref{thm:yamabe-degree} to
\[
  p|_{\Sigma_{j_0}}:\Sigma_{j_0}\to M
\]
and using \(Y(\Sigma_{j_0},[g_{\Sigma_{j_0}}])\leq\sigma(\Sigma_{j_0})\)
gives
\[
  Y(\Sigma_{j_0},[g_{\Sigma_{j_0}}])
  \leq
  -6\left(|d|\Vol_{g_{\hyp}}(M)\right)^{2/3}.
\]
Combining this with the stability lower bound gives
\[
  -6\Vol_g(\Sigma_{j_0})^{2/3}
  \leq
  Y(\Sigma_{j_0},[g_{\Sigma_{j_0}}])
  \leq
  -6\left(|d|\Vol_{g_{\hyp}}(M)\right)^{2/3}.
\]
Hence
\[
  \Vol_g(\Sigma_{j_0})^{2/3}
  \geq
  \left(|d|\Vol_{g_{\hyp}}(M)\right)^{2/3}.
\]
Raising both sides to the power \(3/2\), we get
\[
  \Vol_g(\Sigma_{j_0})
  \geq
  |d|\Vol_{g_{\hyp}}(M)
  \geq
  \Vol_{g_{\hyp}}(M).
\]
Finally,
\[
  \mathbf M(T)
  =
  \sum_j m_j\Vol_g(\Sigma_j)
  \geq
  \Vol_g(\Sigma_{j_0}),
\]
so
\[
  \mathbf M(T)
  \geq
  \Vol_{g_{\hyp}}(M).
\]
Thus the mass minimum of the slice class is at least
\(\Vol_{g_{\hyp}}(M)\).  In particular every smooth volume minimizing $2$-sided hypersurface in the slice class $[M^3 \times \{ \mathrm{pt} \}]$ satisfies the stated volume lower bound.

The product metric
\[
  g_0=g_{\hyp}+h_{\sph^1}
\]
on \(M^3\times\sph^1\), where \(h_{\sph^1}\) is any metric on \(\sph^1\),
shows that the constant is sharp. Since the scalar curvature of \(g_{\hyp}\) is \(-6\),
\[
  \Sc_{g_0}=-6.
\]
Each slice \(M^3\times\{ \mathrm{pt} \}\) is dual to the \(\sph^1\)-factor and has volume
\[
  \Vol_{g_0}(M^3\times\{\mathrm{pt}\})
  =
  \Vol_{g_{\hyp}}(M^3).
\]
Thus the constant \(\Vol_{g_{\hyp}}(M^3)\) cannot be improved.

It remains to discuss the equality case.  Suppose the mass minimum of the slice class is equal to the hyperbolic volume, and choose \(T\) as above with
\[
  \mathbf M(T)=\Vol_{g_{\hyp}}(M).
\]
For the component \(\Sigma_{j_0}\) chosen above,
\[
  \mathbf M(T)
  \geq
  \Vol_g(\Sigma_{j_0})
  \geq
  |d|\Vol_{g_{\hyp}}(M)
  \geq
  \Vol_{g_{\hyp}}(M),
\]
so all three inequalities are equalities. Hence \(|d|=1\),
\(\Vol_g(\Sigma_{j_0})=\Vol_{g_{\hyp}}(M)\), and $m_{j_0} = 1$, $\displaystyle T = \Sigma_j m_j \llbracket \Sigma_{j} \rrbracket = \llbracket \Sigma_{j_0} \rrbracket$. That is, no component other than $\Sigma_{j_0}$ can occur with positive multiplicity.  Thus
\[
  T = \llbracket \Sigma \rrbracket 
\]
for a connected smooth embedded $2$-sided hypersurface \(\Sigma = \Sigma_{j_0}\), with
\[
  \deg(p|_\Sigma)=\pm 1.
\]

We next use equality in the stability--Yamabe estimate.  For \(\Sigma\), equality gives
\[
  Y(\Sigma,[g_\Sigma])
  =
  -6\Vol_g(\Sigma)^{2/3}.
\]
Let $\varphi_0 > 0$ be the Yamabe minimizer of $[g_{\Sigma}]$. Then
 \begin{align*}
   -6 \Vol_g(\Sigma)^{2/3}  =  & Y(\Sigma, [g_\Sigma]) \\
   = & Q_{\Sigma}(\varphi_0) \\
   = & \frac{\displaystyle \int_{\Sigma}\left( 8 |\nabla \varphi_0|^2 + \Sc_{\Sigma} \varphi_0^2 \right)}{\displaystyle \left( \int_{\Sigma} |\varphi_0|^6 \right)^{1/3}} \\
   \geq & \frac{\displaystyle 6 \int_{\Sigma} |\nabla \varphi_0|^2 - 6 \int_{\Sigma} \varphi_0^2}{\displaystyle \left( \int_{\Sigma} |\varphi_0|^6 \right)^{1/3}}  \\
   \geq &  - 6 \Vol_g(\Sigma)^{2/3} + \frac{\displaystyle 6\int_{\Sigma}|\nabla \varphi_0|^2}{\displaystyle \left(\int_{\Sigma}|\varphi_0|^6 \right)^{1/3}}. 
 \end{align*}
Hence
 \[ \nabla \varphi_0 \equiv 0, \]
so the minimizer $\varphi_0$ is a constant. By the Yamabe equation, $g_{\Sigma}$ has constant scalar curvature. So $g_{\Sigma}$ is the Yamabe metric of the conformal class $[g_{\Sigma}]$. Since $\varphi_0$ is constant and $\Sc_{\Sigma}$ is constant,
 \[ Q_{\Sigma}(\varphi_0) = \Sc_{\Sigma} \, \Vol_{g}(\Sigma)^{2/3}. \]
Comparing this with
 \[ Y(\Sigma, [g_{\Sigma}]) = - 6 \Vol_g(\Sigma)^{2/3} \]  
gives $\Sc_{\Sigma} \equiv -6$. By Lemma~\ref{thm:yamabe-degree},
 \begin{align*}
  -6 \Vol_{g_{\mathrm{hyp}}}(M)^{2/3} \leq & - 6 \Vol_g(\Sigma)^{2/3} \\
    = & Y(\Sigma, [g_{\Sigma}]) \\
    \leq & \sigma(\Sigma) \\
    \leq & - 6 \Vol_{g_{\mathrm{hyp}}}(M)^{2/3}.
 \end{align*}
 Thus 
  \[ \sigma(\Sigma) = -6 \Vol_{g_{\mathrm{hyp}}}(M)^{2/3} = -6 \Vol_{g}(\Sigma)^{2/3}, \]
  and $g_{\Sigma}$ is the Yamabe metric realizing $\sigma(\Sigma)$. By Schoen \cite{SchoenVariational}, a Yamabe metric realizing the non-positive Yamabe invariant $\sigma(\Sigma)$ is Einstein. In dimension three, every Einstein manifold has constant sectional curvature (see Besse \cite{Besse1987}). Hence \(\Sigma\) is hyperbolic and \(g_\Sigma\) is the hyperbolic Yamabe metric in its conformal class. Since \(\deg(p|_\Sigma)=\pm1\) and
\(\Vol_g(\Sigma)=\Vol_{g_{\hyp}}(M)\), by the equality case of Besson-Courtois-Gallot's volume entropy rigidity theorem (see \cite{BCGMinimalEntropy}), $p|_{\Sigma}$ is homotopic to an isometry. Then Mostow rigidity identifies \((\Sigma,g_\Sigma)\) with \((M,g_{\hyp})\), up to a diffeomorphism of degree \(\pm1\) (see \cite{MostowRigidity}).

The equalities in the stability calculation also force the extrinsic terms to vanish.  Indeed, the stability inequality and the Gauss equation gave
\[
  \int_\Sigma
  \left(
  2|\nabla\varphi|^2+\Sc_\Sigma\varphi^2
  \right)
  \geq
  \int_\Sigma
  \left(
  \Sc_g+|A|^2
  \right)\varphi^2 .
\]
Taking \(\varphi\equiv1\), using \(\Sc_\Sigma\equiv -6\), and using
\(\Sc_g\geq -6\), we obtain
\[
  -6\Vol_g(\Sigma)
  =
  \int_\Sigma \Sc_\Sigma
  \geq
  \int_\Sigma(\Sc_g+|A|^2)
  \geq
  -6\Vol_g(\Sigma)+\int_\Sigma |A|^2 .
\]
Therefore \(A\equiv0\), and the same chain gives
\[
  \Sc_g\equiv -6
  \qquad\text{along }\Sigma .
\]
The Gauss equation then also gives
\[
  \Ric_N(\nu,\nu)=0
  \qquad\text{along }\Sigma .
\]
Thus the Jacobi operator of \(\Sigma\) is the scalar Laplacian:
\[
  L=\Delta_\Sigma+\Ric_N(\nu,\nu)+|A|^2=\Delta_\Sigma .
\]

We use the local area comparison theorem for equality cases of stable hypersurfaces.  The two-dimensional cases go back to Cai--Galloway and Nunes (see \cite{CaiGallowayTori} and \cite{NunesHyperbolic}); the higher-dimensional formulation is due to Moraru (see \cite{MoraruAreaComparison}). We first recall why the required constant mean curvature foliation exists.

Let \(\nu\) be the chosen unit normal along \(\Sigma\), and write nearby hypersurfaces as normal graphs
\[
  \Sigma_w = \{\exp_x(w(x)\nu(x)): x \in \Sigma\}
\]
for \( w \in C^{2,\alpha}(\Sigma) \) small, where $\alpha \in (0, 1)$.  Let \( H_w \) be the mean curvature of \(\Sigma_w \), computed with the graph normal extending \(\nu\).  At \( w = 0 \), the linearization of the mean curvature \(H\) is
\[
  D H_0(\phi)=-L\phi=-\Delta_\Sigma\phi .
\]
The kernel consists exactly of the constant functions.  These constants are the direction in which the foliation parameter moves the initial leaf. To separate this one-dimensional kernel from the elliptic part, define $f: (-\varepsilon, \varepsilon) \times \Sigma \to \R$ by
\[
  f(t, x) = t + w(x), \qquad  t \in (-\varepsilon, \varepsilon),   \qquad  \int_\Sigma w \, \dd \mu_\Sigma = 0,
\]
where $\varepsilon$ is small positive constant. Let
\[
  C^{k,\alpha}_0(\Sigma)
  =
  \left\{\psi\in C^{k,\alpha}(\Sigma):
  \int_\Sigma \psi\,\dd\mu_\Sigma=0\right\}.
\]
If 
 \[ \overline{H}(t, w) = \frac{1}{\Vol_g(\Sigma)} \int_{\Sigma} H_{t+w} \, \dd \mu_{\Sigma} \]
denotes the average value of \(H(t, w) = H_{t+w} \) on \(\Sigma\), consider
\[
  \mathcal{F}(t, w) = H(t, w) - \overline{H}(t, w),
\]
as a map from a neighborhood of \((0,0)\) in
\(\R\times C^{2,\alpha}_0(\Sigma)\) to \(C^{0,\alpha}_0(\Sigma)\). The derivative in the \(w\)-variable at \((0,0)\) is
\[
  D_w \mathcal F_{(0,0)}(\phi)=-\Delta_\Sigma\phi ,
\]
which is an isomorphism
\[
  C^{2,\alpha}_0(\Sigma)\longrightarrow C^{0,\alpha}_0(\Sigma).
\]
The implicit function theorem therefore gives a smooth curve
 \[ w: (-\varepsilon_1, \varepsilon_1) \longrightarrow C_0^{2, \alpha}(\Sigma), \qquad w(t) = w(t, \cdot) \in C_0^{2, \alpha}(\Sigma), \]
 such that $\mathcal{F}(t, w(t)) = 0$ for $t \in (-\varepsilon_1, \varepsilon_1)$, where $\varepsilon_1 \in (0, \varepsilon)$. Since \(D_t\mathcal F_{(0,0)}=0\), uniqueness in the implicit function theorem also gives \( \frac{\partial w}{\partial t}( 0, x) = 0 \). Thus the graph height derivative
\[
  \frac{\partial}{\partial t} \left( t+ w(t, x) \right) = 1+ \frac{\partial w}{\partial t}(t, x)
\]
is positive for \(|t|\) small.  After shrinking \(\varepsilon\), the graphs
are therefore disjoint and form a smooth constant mean curvature foliation
\[
  \{ \Sigma_t \}_{|t| < \varepsilon} = \{ \exp_x ( f(t, x) \nu(x) ) \colon  x \in \Sigma , t \in (-\varepsilon, \varepsilon) \}
\]
near \(\Sigma=\Sigma_0\).  Since the leaves are obtained by a small isotopy
of \(\Sigma\), each \(\Sigma_t\) is homologous to \(\Sigma\).
After precomposing the parametrizations by time dependent diffeomorphisms of $\Sigma$, choose adapted coordinates satisfying $\partial_t = u \nu_t$, where $\nu_t$ is the unit normal on $\Sigma_t$. Then the metric in this tubular neighborhood becomes
\[
  g=u^2 dt^2 + g_t,
\]
where \(u > 0\) is the lapse function, \(g_t\) is the induced metric on \(\Sigma_t\), see \cite{MoraruAreaComparison} for more details. Let \(H(t)\) denote the constant mean curvature of \(\Sigma_t\).  The first
variation formula gives
\[
  \frac{d}{dt}\Vol_g(\Sigma_t)
  = 
  \int_{\Sigma_t}H(t)u\,\dd\mu_t .
\]
Since \(\Sigma=\Sigma_0\) is area minimizing in its homology class and the \(\Sigma_t\)'s are homologous to \(\Sigma\), the function \(t\mapsto \Vol_g(\Sigma_t)\) has a minimum at \(t=0\).

The equalities already proved give the hypotheses of the local area comparison theorem (see \cite[Theorem 4]{MoraruAreaComparison}) for this CMC foliation: \(\Sigma\) is totally geodesic,
\(\Ric_N(\nu,\nu)=0\), \(\Sc_g\equiv -6\) along \(\Sigma\), and
\(g_\Sigma\) attains the Yamabe invariant of \(\Sigma\). After possibly shrinking \(\varepsilon\), the theorem gives
\begin{equation*} 
  \Vol_g(\Sigma_t)\leq \Vol_g(\Sigma_0)
  \qquad (|t|<\varepsilon). 
\end{equation*}
Together with the area minimizing property, this implies
\[
  \Vol_g(\Sigma_t)=\Vol_g(\Sigma_0)
  \qquad (|t|<\varepsilon).
\]
For completeness, we recall the lapse equation argument behind the equality case (see \cite{MoraruAreaComparison} for details). Since $\Vol_g(\Sigma_t) = \Vol_g(\Sigma_0)$, each $\Sigma_t$ is a globally minimizing equality representative for the homology class $[M^3 \times \{ \mathrm{pt}\}] \in H_3(M^3 \times \sph^1; \Z)$. If we repeat the equality arguments proved for $\Sigma$, then it gives
 \[
  A_t\equiv0, \qquad \Sc_g\equiv -6,\qquad \Ric_N(\nu_t,\nu_t) \equiv0  \quad \text{on }\Sigma_t .
\]  
The variation of mean curvature in the normal direction $\partial_t = u \nu_t$ is 
 \[   H'(t)  =  -\Delta_{\Sigma_t}u  - \bigl(\Ric_N(\nu_t,\nu_t)+ |A_t|^2\bigr)u . \]
The global minimizing property implies $H(t) \equiv 0$, hence $H^{\prime}(t) \equiv  0$. Therefore the lapse equation becomes
\[
  \Delta_{\Sigma_t}u=0.
\]
Since \(\Sigma_t\) is closed, \(u\) is constant on each leaf.  After reparametrizing \(t\), we may assume \(u\equiv1\).  Finally,
\[
  \partial_t g_t=2uA_t=0,
\]
since $\Sigma_t$ is totally geodesic. Therefore \(g_t\) is independent of \(t\). Thus there is \(\varepsilon>0\) and a diffeomorphism
\[
  \Phi:\Sigma\times(-\varepsilon,\varepsilon)\longrightarrow U\subset N
\]
with \(\Phi(\Sigma\times\{0\})=\Sigma\), such that
\[
  \Phi^*g=g_\Sigma+dt^2.
\]
Since \(g_\Sigma=g_{\hyp}\), this is the local product splitting near \(\Sigma\).

\medskip

It remains to extend the local splitting globally.  We use the continuation
principle appearing in the equality cases of Bray--Brendle--Neves and Zhu
(compare \cite{BrayBrendleNevesTwoSpheres} and
\cite{ZhuRigiditySpheres}): once the local foliation consists of minimizing
equality leaves, a terminal interior leaf is again an equality leaf, so the
local splitting theorem can be restarted there.

Cut \(N=M^3\times\sph^1\) open along \(\Sigma\).  Because \(\Sigma\)
represents the class dual to the \(\sph^1\)-factor and \(p|_\Sigma\) has
degree \(\pm1\), after the above identification the cut-open manifold is a
connected cobordism from \(M\) to \(M\).  Denote this cut-open manifold by
\(W\), and denote its boundary components by \(\Sigma_-\) and \(\Sigma_+\),
where \(\Sigma_-\) is the initial copy of \(\Sigma\). Let \(I\subset[0,\infty)\) be the maximal interval on which the
inward normal exponential map from \(\Sigma_-\) gives an embedded product
region
\[
  F:\Sigma\times I\longrightarrow W,\qquad F^*g=g_{\hyp}+dt^2,
\]
with \(F(\Sigma,0)=\Sigma_-\).  In this product region the second coordinate
is the normal distance from \(\Sigma_-\).  Thus the picture is literally a
cylinder whose horizontal slices are
\[
  \Sigma_t=F(\Sigma\times\{t\})
\]
and whose vertical curves \(t\mapsto F(x,t)\) are unit-speed normal geodesics.
Equivalently, before the product region fills \(W\), each \(\Sigma_t\) is the
graph of the constant function \(t\) over the initial slice \(\Sigma_-\).

\begin{center}
\begin{tikzpicture}[scale=0.95, every node/.style={font=\small}]
  \fill[blue!4, rounded corners=6pt] (0,0) rectangle (6.2,3.1);
  \draw[gray!55, rounded corners=6pt] (0,0) rectangle (6.2,3.1);

  \draw[thick]
    (0.35,0.35) .. controls (1.8,0.05) and (4.4,0.05) .. (5.85,0.35);
  \draw[thick]
    (0.35,2.75) .. controls (1.8,3.05) and (4.4,3.05) .. (5.85,2.75);

  \draw[blue!70!black,thick]
    (0.35,1.18) .. controls (1.8,0.90) and (4.4,0.90) .. (5.85,1.18);
  \draw[blue!70!black,thick]
    (0.35,1.95) .. controls (1.8,1.67) and (4.4,1.67) .. (5.85,1.95);

  \node[left] at (0.25,0.35) {\(\Sigma_-\)};
  \node[right] at (5.95,2.75) {\(\Sigma_+\)};
  \node[blue!70!black,right] at (5.95,1.18) {\(\Sigma_t\)};
  \node[blue!70!black,right] at (5.95,1.95) {\(\Sigma_{t'}\)};

  \foreach \x in {1.2,3.1,5.0} {
    \draw[gray!70,->,thick] (\x,0.42) -- (\x,2.68);
  }
  \node[gray!70] at (3.1,2.98) {normal geodesics};

  \draw[->,thick] (6.75,0.35) -- (6.75,2.75);
  \node[right] at (6.75,1.55) {normal distance \(t\)};

  \node at (3.1,-0.25) {\(\Sigma\times\{0\}\)};
  \node at (3.1,3.45) {\(\Sigma\times\{L\}\)};
  \node[fill=white,inner sep=2pt] at (3.1,1.55)
    {\(F:\Sigma\times I\longrightarrow W\)};
\end{tikzpicture}
\end{center}

Consequently each leaf separates \(\Sigma_-\) from \(\Sigma_+\) and represents the same slice class.  The interval is nontrivial because the local splitting theorem gives a product collar of \(\Sigma_-\).  Moreover, the same theorem gives the continuation property at every interior leaf: if \(t_0\in I\) and \(\Sigma_{t_0}\) is still in the interior of \(W\), then \(\Sigma_{t_0}\) is a minimizing equality representative of the slice class, so the local product splitting at \(\Sigma_{t_0}\) extends the product coordinate a little farther in the normal direction. Thus a maximal product interval can fail to continue only by reaching a limiting leaf. We now rule
out any limiting leaf in the interior of \(W\).

Indeed, suppose that \(t_i\in I\) and \(t_i\to t_*\). On the product region the second fundamental forms of the leaves vanish and the induced metrics are all equal to \(g_{\hyp}\). The normal geodesics therefore have unit speed and uniform geometry up to time \(t_*\).  If they have not reached
\(\Sigma_+\), the maps \(F(\cdot,t_i)\) have a smooth limiting immersion
\[
  \Sigma_{t_*}=F(\Sigma\times\{t_*\})
\]
with induced metric \(g_{\hyp}\).  We now interpret this limit as an integral current.  Since the slices 
\( F(\cdot, t_i)_{\#}(\llbracket \Sigma \rrbracket) \) are all homologous to \([\Sigma]\), the flat limit \(F(\cdot,t_*)_\#( \llbracket \Sigma \rrbracket ) \) represents the same slice class.  Its mass is at most \(\Vol_{g_{\hyp}}(M)\).  On the other hand, \(\llbracket \Sigma \rrbracket \) is the mass minimizing current in the slice class and, in the equality case,
\[
  \mathbf M( \llbracket \Sigma \rrbracket )=\Vol_g(\Sigma)=\Vol_{g_{\hyp}}(M).
\]
Thus the limiting current is again mass minimizing and has exactly this mass. By regularity for codimension-one mass minimizers in dimension \(4\), it is represented by a $2$-sided smooth embedded hypersurface, possibly with integer multiplicities. Applying the equality discussion above to this embedded hypersurface rules out extra components and multiplicities.  We continue to denote the resulting connected multiplicity-one hypersurface by \(\Sigma_{t_*}\). It is a minimizing equality representative of the slice class and satisfies
\[
  \Vol_g(\Sigma_{t_*})=\Vol_{g_{\hyp}}(M).
\]
Further, equality in the above area formulas excludes cancellation. Hence the image $F(\Sigma, t_{*})$ is exactly the embedded support of the limiting current $\llbracket \Sigma_{t_{*}} \rrbracket$. Therefore $F(\cdot, t_{*}): \Sigma \to \Sigma_{t_{*}}$ is a finite Riemannian covering map. If it is a $k$-sheeted covering, then
 \[ \Vol_{\mathrm{hyp}}(\Sigma) = k \Vol_{g}(\Sigma_{t_{*}}). \]
But both volumes are equal to $\Vol_{\mathrm{hyp}}(M)$. Then $k = 1$ and $F(\cdot , t_{*})$ is an embedding. Moreover, $t_{*} < \infty$, since for any embedded product region $F(\Sigma \times [0, t]) \subset W$,
 \[ 
 t \, \Vol_{\mathrm{hyp}}(M) = \Vol_g(F(\Sigma \times [0, t])) \leq \Vol_g(W). 
 \]
We claim that $\Sigma_{t_{*}}$ is disjoint from every earlier hypersurface $\Sigma_{t}$. Suppose otherwise, choose $\bar{t} < t_{*}$ such that $\Sigma_{\bar{t}} \cap \Sigma_{t_{*}} \neq \emptyset$. Define 
 \[ t_0 = \min\{0 \leq t \leq \bar{t} : \Sigma_t \cap \Sigma_{t_{*}} \neq \emptyset \}. \] 
The contact time set is nonempty and closed, by compactness of $\Sigma$ and continuity of $F$. Hence its minimum exists. 
Note that the connected region $F(\Sigma \times (t_0, t_{*}))$ is disjoint from $\Sigma_{t_0}$. So it lies in a single component $C^{+}_{t_0}$ of $W \setminus \Sigma_{t_0}$. Consequently,
 \[ \Sigma_{t_{*}} \subset \overline{C^{+}_{t_0}}. \] 
Thus, at any intersection with $\Sigma_{t_0}$, the limiting hypersurface $\Sigma_{t_{*}}$ is tangent to $\Sigma_{t_0}$ and lies locally on one side of it. The strong maximum principle therefore gives $\Sigma_{t_{*}} = \Sigma_{t_0}$.
At this common hypersurface, the two normals are opposite. And note that $C^{+}_{t_0}$ is the positive side of $\Sigma_{t_0}$ but the negative side of $\Sigma_{t_{*}}$. Therefore, the uniqueness of normal coordinate gives $\Sigma_{t_{*}-\delta} = \Sigma_{t_{0}+\delta}$. Choosing $0 < \delta < (t_{*} - t_0)/2$ contradicts injectivity of $F$ on $\Sigma \times [0, t_{*})$. Hence $\Sigma_{t_*}$ is disjoint from all the earlier embedded hypersurfaces $\Sigma_t$, for $0 \leq t < t_{*}$. If \(\Sigma_{t_*}\) lies in the interior of \(W\), the local splitting theorem applied to \(\Sigma_{t_*}\), with the normal chosen to continue the previous
normal coordinate, extends the product region past \(t_*\).  Thus no interior endpoint can occur.

Consequently the only possible endpoint of the maximal interval is the other boundary component \(\Sigma_+\).  Since \(W\) is compact and connected, the swept-out region cannot stop before reaching \(\Sigma_+\). If the limiting leaf $\Sigma_{t_{*}}$ touches the boundary \( \Sigma_{+} \), then it must coincide with $\Sigma_{+}$. Since both \( \Sigma_{t_{*}} \) and $\Sigma_{+}$ are closed minimal hypersurfaces, and $\Sigma_{+}$ has a product collar from the local splitting at $\Sigma$, the strong maximum principle leads to the coincidence. Set $L = t_{*}$. The image $F(\Sigma \times [0, L])$ is closed, locally open in $W$ by the product collars, and nonempty. Then the connectedness of $W$ gives
 \[ W = F(\Sigma \times [0, L]). \] 
Hence, 
\[
  W\cong \Sigma\times [0,L],  \qquad  g=g_{\hyp}+dt^2
\]
with \(\Sigma\times\{0\}=\Sigma_-\) and \(\Sigma\times\{L\}=\Sigma_+\).

It remains only to identify the gluing.  Gluing the two boundary components
of \(W\) gives the mapping torus
 \[ N_{\varphi} = \Sigma \times [0, L] / (x, 0) \equiv (\varphi (x), L) \]
 of an isometry
\[
  \varphi:(\Sigma,g_{\hyp})\longrightarrow(\Sigma,g_{\hyp}),
\]
with metric \(g_{\hyp}+dt^2\) on the cylinder before gluing.  The fibration class of this mapping torus $N_{\varphi}$ is the cohomology class dual to \(\Sigma\), which is the original \(\sph^1\)-class on \(M\times\sph^1\). Hence the kernel of the induced homomorphism
\[
  \pi_1(N_{\varphi}) \longrightarrow \Z
\]
is the subgroup \(\pi_1(\Sigma)\) coming from the product. On the other hand, for the mapping torus $N_{\varphi}$ this kernel is the fiber subgroup \(\pi_1(\Sigma) \cong \pi_1(M)\). Then the fundamental group of the mapping torus fits into
 \[ 1 \longrightarrow \pi_1(\Sigma) \longrightarrow \pi_1(N_{\varphi}) \xlongrightarrow{q} \Z \longrightarrow 1.  \]
 Choose a stable letter $t \in \pi_1(N_{\varphi})$ with $q(t) = 1$. Then
 \[ \pi_1(N_{\varphi}) \cong \pi_1(\Sigma) \rtimes_{\varphi_{*}} \Z, \quad t g t^{-1} = \varphi_{*}(g). \]
After using the preceding isometric identification \( (\Sigma, g_{\mathrm{hyp}}) \cong (M, g_{\mathrm{hyp}}) \), the original product structure gives an isomorphism  
 \[ \Theta\colon \pi_1(N_{\varphi}) \longrightarrow \pi_1(\Sigma) \times \Z \]
 satisfying $\mathrm{proj}_2 \circ \Theta = q$. Let 
 \[ \beta = \mathrm{proj}_1 \circ \Theta|_{\pi_1(\Sigma)}\colon \pi_1(\Sigma) \longrightarrow \pi_1(\Sigma) \]
be the isomorphism. Writing $\Theta(t) = (a, 1)$ for some $a \in \pi_1(\Sigma)$, we obtain
 \[ \beta \circ \varphi_{*} \circ \beta^{-1} = \mathrm{Inn}_{a}. \]
Thus $[\varphi_{*}]$ is trivial in $\mathrm{Out}(\pi_1(\Sigma))$. Since $\Sigma$ is aspherical, $\varphi$ is homotopic to the identity. By Mostow rigidity theorem, the homomorphism 
 \[ \mathrm{Isom}(\Sigma , g_{\mathrm{hyp}}) \longrightarrow \mathrm{Out}(\pi_1(\Sigma)) \]
is injective, and therefore $\varphi = \mathrm{id}$. Thus the mapping torus is the ordinary product, and, after a diffeomorphism preserving the slice class, the metric is
\[
  g_{\hyp}+h_{\sph^1}.
\]
This proves the rigidity statement.
\end{proof}

\bigskip

\section{The Higher-Dimensional Problem}\label{sec:higher-dimensional}

The proof in dimension three has two distinct parts.  The construction of a mass minimizing hypersurface and the estimate obtained from stability extend to higher dimensions whenever the minimizer is smooth.  The sharp estimate
for the Yamabe invariant under a map of nonzero degree is presently available here only in dimension three.  We formulate the higher-dimensional problem so that these two parts remain separate.

\subsection{The volume and rigidity questions}

Let \((M^n,g_{\hyp})\) be a closed oriented hyperbolic \(n\)-manifold with \(\secop_{g_{\hyp}}\equiv-1\), and let \(N=M\times\sph^1\). Set
\[
  \alpha=[M\times\{\mathrm{pt}\}]\in H_n(N;\Z).
\]
For a Riemannian metric \(g\) on \(N\), denote by
\[
  \mathcal A_g(\alpha) = \inf\bigl\{\mathbf M_g(T):
  T \text{ is an integral }n\text{-cycle representing }\alpha\bigr\}
\]
the least mass of the slice class.

\begin{question}[Volume comparison]\label{q:higher-dimensional-volume}
If \(g\) is a smooth Riemannian metric on \(N\) satisfying
\[
  \Sc_g\geq-n(n-1),
\]
is it true that
\[
  \mathcal A_g(\alpha)\geq\Vol_{g_{\hyp}}(M)?
\]
\end{question}

\begin{question}[Rigidity]\label{q:higher-dimensional-rigidity}
Under the hypotheses of Question~\ref{q:higher-dimensional-volume}, suppose
that
\[
  \mathcal A_g(\alpha)=\Vol_{g_{\hyp}}(M).
\]
Must \(g\), after a diffeomorphism preserving \(\alpha\), be of the form
\[
  g_{\hyp}+h_{\sph^1}
\]
for some metric \(h_{\sph^1}\) on \(\sph^1\)?
\end{question}

The current formulation is useful in dimensions \(n\geq7\), where a mass minimizing representative may have a singular set.  For \(3 \leq n \leq6 \), regularity gives a smooth embedded $2$-sided minimizing hypersurface, possibly with several components and integer multiplicities.

\subsection{Relation with Schoen's hyperbolic volume conjecture}

Question~\ref{q:higher-dimensional-volume} is stronger than the volume inequality in Conjecture~\ref{conj:schoen-volume}. Let \(g_M\) be a metric on \(M\) satisfying
\[
  \Sc_{g_M}\geq-n(n-1).
\]
For any metric \(h_{\sph^1}\) on \(\sph^1\), the product metric \(g_M+h_{\sph^1}\) has the same scalar curvature, and the slice \(M\times\{\mathrm{pt}\}\) represents \(\alpha\).  Therefore
\[
  \mathcal A_{g_M+h_{\sph^1}}(\alpha)  \leq  \Vol_{g_M}(M).
\]
An affirmative answer to Question~\ref{q:higher-dimensional-volume} would then give
\[
  \Vol_{g_M}(M)\geq\Vol_{g_{\hyp}}(M),
\]
which is Schoen's conjectured inequality. We do not know whether the converse holds. For $3 \leq n \leq 6$, Conjecture~\ref{conj:degree-yamabe} supplies the missing sharp estimate for the hypersurface approach to Question~\ref{q:higher-dimensional-volume} described below.

\subsection{The standard hypersurface reduction}

Assume \(3 \leq n\leq6\), and let \(T\) minimize mass in the class \(\alpha\).
The regularity theory for codimension-one mass minimizers writes
\[
  T=\sum_j m_j \llbracket \Sigma_j \rrbracket,
\]
where the \(\Sigma_j\) are smooth, connected, oriented, stable minimal
hypersurfaces and \(m_j\in\N\).  Let \(p:N\to M\) be the projection.  Since \( [p_\#T] =[M]\),
\[
  \sum_jm_j\deg(p|_{\Sigma_j})=1.
\]
At least one component, denoted by \(\Sigma_0\), therefore has nonzero degree $d=\deg(p|_{\Sigma_0})$.

The stability inequality and the Gauss equation give, for every smooth function \(\varphi\) on \(\Sigma_0\),
\[
  \int_{\Sigma_0}
  \left(2|\nabla\varphi|^2+\Sc_{\Sigma_0}\varphi^2\right)
  \geq
  -n(n-1)\int_{\Sigma_0}\varphi^2.
\]
Since \(4(n-1)/(n-2)>2\), the Yamabe quotient and H\"older's inequality yield the standard estimate
\[
  Y(\Sigma_0,[g_{\Sigma_0}])
  \geq
  -n(n-1)\Vol_g(\Sigma_0)^{2/n}.
\]
Thus regularity, stability, the Gauss equation, and the existence of a component of nonzero degree do not constitute the main unresolved step.

\subsection{The comparison conjecture}

The missing estimate is the following degree form of the Yamabe comparison.

\begin{conjecture}[Degree Yamabe comparison]\label{conj:degree-yamabe}
Let \((M^n,g_{\hyp})\) be a closed oriented $n$-dimensional hyperbolic manifold with \(\secop_{g_{\hyp}}\equiv-1\).  If \(X^n\) is a closed oriented $n$-dimensional manifold and \(f:X\to M\) has degree \(d\neq0\), then every conformal class \([h]\) on
\(X\) satisfies
\[
  Y(X,[h]) \leq -n(n-1) \left(|d|\Vol_{g_{\hyp}}(M)\right)^{2/n}.
\]
\end{conjecture}

In dimension three, Lemma~\ref{thm:yamabe-degree} proves this statement by combining geometrization with simplicial volume.  No corresponding formula for the Yamabe invariant is known in higher dimensions.  The Besson--Courtois--Gallot theorem proves the analogous sharp degree inequality for minimal entropy (see \cite{BCGMinimalEntropy}), but it does not give the scalar-curvature estimate in Conjecture~\ref{conj:degree-yamabe}.

The conjecture gives a direct route to the volume comparison.

\begin{proposition}\label{prop:degree-yamabe-reduction}
Let \(3\leq n\leq6\).  If Conjecture~\ref{conj:degree-yamabe} holds for the hyperbolic manifold \(M\), then Question~\ref{q:higher-dimensional-volume}
has an affirmative answer for \(M\).
\end{proposition}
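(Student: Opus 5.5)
The argument is the three-dimensional proof with Lemma~\ref{thm:yamabe-degree} replaced by Conjecture~\ref{conj:degree-yamabe}; nothing else in that chain was special to dimension three. Let \(g\) satisfy \(\Sc_g\geq -n(n-1)\) on \(N=M\times\sph^1\).

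First, fix a mass minimizer \(T\) in \(\alpha\). Since \(n+1\leq 7\), regularity for codimension-one mass minimizers gives
\[
  T=\sum_j m_j\llbracket\Sigma_j\rrbracket,
\]
with each \(\Sigma_j\) smooth, connected, oriented and stable, and each \(m_j\in\N\). Pushing forward under the projection \(p\) gives \(\sum_j m_j\deg(p|_{\Sigma_j})=1\), so some component \(\Sigma_0\) has degree \(d\neq0\).

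Second, derive the stability lower bound on \(\Sigma_0\), as in the standard hypersurface reduction. The second variation inequality together with the Gauss equation gives
\[
  \int_{\Sigma_0}\bigl(2|\nabla\varphi|^2+\Sc_{\Sigma_0}\varphi^2\bigr)\geq -n(n-1)\int_{\Sigma_0}\varphi^2 .
\]
Since \(4(n-1)/(n-2)\geq 2\), the left side only increases when the coefficient \(2\) is replaced by the Yamabe coefficient. H\"older's inequality with exponents \(n/(n-2)\) and \(n/2\) bounds \(\int\varphi^2\) by \(\Vol_g(\Sigma_0)^{2/n}\|\varphi\|_{2n/(n-2)}^2\). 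Together these give
\[
  Y(\Sigma_0,[g_{\Sigma_0}])\geq -n(n-1)\Vol_g(\Sigma_0)^{2/n}.
\]

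Third, apply Conjecture~\ref{conj:degree-yamabe} to \(p|_{\Sigma_0}\colon\Sigma_0\to M\) and the conformal class \([g_{\Sigma_0}]\). This gives
\[
  Y(\Sigma_0,[g_{\Sigma_0}])\leq -n(n-1)\bigl(|d|\Vol_{g_{\hyp}}(M)\bigr)^{2/n}.
\]
Comparing the two bounds and dividing by \(-n(n-1)<0\) yields \(\Vol_g(\Sigma_0)\geq |d|\Vol_{g_{\hyp}}(M)\geq \Vol_{g_{\hyp}}(M)\). Hence
\[
  \mathcal A_g(\alpha)=\mathbf M(T)\geq m_0\Vol_g(\Sigma_0)\geq \Vol_{g_{\hyp}}(M).
\]

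The steps needing care are minor bookkeeping rather than a real obstacle. One must confirm that the infimum defining \(\mathcal A_g(\alpha)\) is attained by an integral current; this follows from Federer--Fleming compactness. One must also confirm that the regularity theorem applies exactly in the range \(n+1\leq7\), which is why the hypothesis \(3\leq n\leq6\) is needed. Once \(T\) is smooth, the argument above is purely formal.
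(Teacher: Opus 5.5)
Your proposal is correct and follows the paper's own argument: take a mass minimizer in the slice class, use regularity for $n\le 6$, pick a component of nonzero degree, bound its Yamabe constant from below by stability, the Gauss equation and H\"older, and then apply Conjecture~\ref{conj:degree-yamabe} as the upper bound in place of Lemma~\ref{thm:yamabe-degree}. Your remarks that Federer--Fleming compactness gives attainment of $\mathcal A_g(\alpha)$ and that regularity requires ambient dimension $n+1\le 7$ are bookkeeping the paper leaves implicit.
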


\begin{proof}
Apply Conjecture~\ref{conj:degree-yamabe} to
\(p|_{\Sigma_0}:\Sigma_0\to M\).  Together with the stability estimate, it gives
\[
  -n(n-1)\Vol_g(\Sigma_0)^{2/n}
  \leq
  Y(\Sigma_0,[g_{\Sigma_0}])
  \leq
  -n(n-1)
  \left(|d|\Vol_{g_{\hyp}}(M)\right)^{2/n}.
\]
Consequently,
\[
  \Vol_g(\Sigma_0)
  \geq
  |d|\Vol_{g_{\hyp}}(M)
  \geq
  \Vol_{g_{\hyp}}(M).
\]
Since \(\mathbf M_g(T)\geq\Vol_g(\Sigma_0)\), the same lower bound holds for \(\mathcal A_g(\alpha)\).
\end{proof}

For a Yamabe metric \(\hat h\in[h]\) normalized by
\(\Sc_{\hat h}\equiv-n(n-1)\), Conjecture~\ref{conj:degree-yamabe} becomes the volume inequality
\[
  \Vol_{\hat h}(X)
  \geq
  |d|\Vol_{g_{\hyp}}(M).
\]
It is therefore a degree theorem for all manifolds mapping to \(M\), not only a comparison between two metrics on \(M\).

\subsection{Rigidity in higher dimensions}

An affirmative answer to Question~\ref{q:higher-dimensional-volume} implies the volume inequality part of Conjecture~\ref{conj:schoen-volume}, and hence by the equivalence recalled in \cite{YuanVolumeComparison}, implies Conjecture~\ref{conj:schoen-yamabe}. For $3 \leq n \leq 6$, Conjecture~\ref{conj:degree-yamabe} supplies the sharp bound for the hypersurface method above. Then completing the rigidity argument would additionally require analyzing equality throughout and carrying out the corresponding local-to-global splitting argument. For $n \geq 7$, we must also treat the singular set of a mass minimizing hypersurface.

\bibliographystyle{amsplain}
\bibliography{hyperbolic_volume_bound}

\end{document}